\documentclass[12pt,reqno]{amsart}
\usepackage[a4paper]{geometry}
\usepackage{amssymb,amsmath,amsthm,enumerate,verbatim,bbm}
\usepackage[mathscr]{euscript}

\sloppy
\frenchspacing


\DeclareMathOperator{\Ran}{Ran}

\DeclareMathOperator{\Ker}{Ker}

\DeclareMathOperator{\BMOA}{BMOA}

\newcommand{\abs}[1]{{\lvert#1\rvert}}

\newcommand{\norm}[1]{\lVert#1\rVert}


\newcommand{\bbT}{{\mathbb T}}

\newcommand{\bbD}{{\mathbb D}}

\newcommand{\calH}{{\mathcal H}}

\newcommand{\calK}{K}

\newcommand{\bC}{\mathbf C}


\numberwithin{equation}{section}
\renewcommand{\[}{\begin{equation}}
\renewcommand{\]}{\end{equation}}


\theoremstyle{plain}
\newtheorem{theorem}{\bf Theorem}[section]
\newtheorem*{theorem*}{Theorem 1.1$'$}
\newtheorem{lemma}[theorem]{\bf Lemma}

\theoremstyle{definition}

\newtheorem*{definition*}{\bf Definition}

\theoremstyle{remark}
\newtheorem*{remark*}{\bf Remark}
\newtheorem{remark}[theorem]{\bf Remark}


\DeclareFontFamily{U}{mathx}{\hyphenchar\font45}
\DeclareFontShape{U}{mathx}{m}{n}{<5> <6> <7> <8> <9> <10>
<10.95> <12> <14.4> <17.28> <20.74> <24.88> mathx10}{}
\DeclareSymbolFont{mathx}{U}{mathx}{m}{n}
\DeclareFontSubstitution{U}{mathx}{m}{n}
\DeclareMathAccent{\wc}{0}{mathx}{"71}

\newcommand{\wt}{\widetilde}
\newcommand{\wh}{\widehat}

\newcommand{\1}{\mathbbm{1}}


\begin{document}

\title[Schmidt subspaces for Hankel operators]{The structure of Schmidt subspaces of Hankel operators: a short proof}

\author{Alexander Pushnitski}
\address{Department of Mathematics, King's College London, Strand, London, WC2R~2LS, U.K.}
\email{alexander.pushnitski@kcl.ac.uk}

\author{Patrick G\'erard}
\address{Universit\'e Paris-Sud XI, Laboratoire de Math\'ematiques d'Orsay, CNRS, UMR 8628, and Institut Universitaire de France}
\email{Patrick.Gerard@math.u-psud.fr}

\subjclass[2010]{47B35,30H10}

\keywords{Hankel operators, Hardy space, model spaces, nearly invariant subspaces}

\begin{abstract}
We give a short proof of the main result of \cite{A}: every Schmidt subspace of a Hankel operator is the 
image of a model space by an isometric multiplier. This class of subspaces is 
closely related to nearly $S^*$-invariant subspaces, and our proof uses Hitt's theorem 
on the structure of such subspaces. 
We also give a formula for the action of a Hankel operator on its Schmidt subspace. 
\end{abstract}

\date{12 July 2019}

\maketitle

\section{Introduction and main result}\label{sec.a}

\subsection{Hankel operators}

Let $\calH^2\subset L^2(\bbT)$ be the standard Hardy space of the unit disk,
and let $P$ be the orthogonal projection onto $\calH^2$ 
in $L^2(\bbT)$. For a \emph{symbol} 
$u\in\BMOA(\bbT)$, we define the Hankel operator $H_u$ acting on $\calH^2$ by 
\[
 H_uf=P(u\overline{f}), \quad f\in \calH^2.
\label{b0}
\]
Thus, $H_u$ is an \emph{anti-linear} operator. 
Denoting by $(\cdot,\cdot)$ the standard inner product in $\calH^2$, we have
$$
(H_uz^n,z^m)=(P(u\overline{z^n}),z^m)=(u\overline{z^n},z^m)=(u,z^{m+n})=\wh u(n+m),
$$
where $n,m\geq0$ and $\wh u(\cdot)$ are the Fourier coefficients of $u$. 
Thus, $H_u$ is the anti-linear realisation of the Hankel matrix $\{\wh u(n+m)\}_{n,m\geq0}$
in the Hardy class $\calH^2$. 
In Section~\ref{sec.a5} we 
recall the relation of $H_u$  to a linear realisation of the Hankel matrix in $\calH^2$.

Our aim is to describe the \emph{Schmidt subspaces}
$$
E_{H_u}(s):=\Ker (H_u^2-s^2I), \quad s>0,
$$
as a class of subspaces in $\calH^2$. Since $H_u$ commutes with $H_u^2$,  
we see that $E_{H_u}(s)$ is an invariant subspace for $H_u$
(this is one of the advantages of working with the anti-linear realisation $H_u$). 
We give the formula for the action of $H_u$ on this subspace. 

\subsection{Model spaces and isometric multipliers}
For an inner function $\theta$ on the unit disk we use the standard notation 
$$
\calK_\theta=\calH^2\cap (\theta \calH^2)^\perp
$$
for the corresponding model space. A convenient equivalent 
description of $\calK_\theta$ is 
\[
h\in \calK_\theta  \quad \Leftrightarrow \quad h\in \calH^2 \text{ and } \overline{z}\theta \overline{h}\in \calH^2. 
\label{a2}
\]
Observe that for $h\in\calK_\theta$, the combination $\overline{z}\theta \overline{h}$ is again in $\calK_\theta$.

As usual, we denote by $Sf(z)=zf(z)$ the shift operator in $\calH^2$ and $S^*$ 
is the adjoint of $S$ in $\calH^2$. 
Recall that the significance of model spaces stems from 
Beurling's theorem which implies that a proper subspace of $\calH^2$ is invariant
under $S^*$ if and only if it is a model space.

If $p$ is an analytic function in the unit disk, we will say that $p$
is an \emph{isometric multiplier} on $\calK_\theta$, if for every $f\in \calK_\theta$
we have $pf\in \calH^2$ and 
$\norm{pf}=\norm{f}$.
In this case we denote
$$
p\calK_\theta:=\{pf: f\in \calK_\theta\}\, .
$$
We note that for a subspace $p\calK_\theta\subset\calH^2$, 
the choice of the parameters $p$ and $\theta$ in this representation is not unique. 
One can multiply $p$ and $\theta$ by arbitrary unimodular constants and one 
can also perform \emph{Frostman shifts} on $p\calK_\theta$, see
 Section~\ref{sec.frostman} for the details. 

\subsection{Main result and discussion}

\begin{theorem}\label{thm1}\cite{A}
Let $H_u$ be a bounded Hankel operator \eqref{b0} in $\calH^2$. 
Every non-trivial Schmidt subspace $E_{H_u}(s)$, $s>0$, is of the form $p\calK_\theta$, where
$\theta$ is an inner function and $p$ is an isometric multiplier on $\calK_\theta$. 
Moreover, there exists a unimodular constant $e^{i\varphi}$ such that the action of $H_u$ on this subspace is given by 
\[
H_u(ph)=se^{i\varphi} p \overline{z} \theta \overline{h}, \quad h\in \calK_\theta\, .
\label{a00}
\]
\end{theorem}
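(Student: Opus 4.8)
The plan is to show that each non-trivial Schmidt subspace $E:=E_{H_u}(s)$ is \emph{nearly $S^*$-invariant} (i.e. $f\in E$ and $f(0)=0$ imply $S^*f\in E$), and then to read off both the representation $E=p\calK_\theta$ and the action formula \eqref{a00} from Hitt's theorem together with the conjugation structure carried by $H_u$. First I would record the elementary identities on which everything rests. From the Fourier computation $(H_uz^n,z^m)=\wh u(n+m)$ one reads off the intertwining relation $H_uS=S^*H_u$ and the fact $H_u1=u$ (recall $u\in\calH^2$). Introducing the companion operator $\wt H:=S^*H_u=H_uS$, a short calculation using $SS^*=I-(\,\cdot\,,1)1$ gives the rank-one identity $H_u^2-\wt H^2=(\,\cdot\,,u)\,u$, and a direct evaluation of $P(z\,u\bar\phi)$ gives $H_uS^*\phi=SH_u\phi+(Su,\phi)\,1$.

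To prove near $S^*$-invariance I would take $f\in E$ with $f(0)=0$ and set $g:=S^*f$, so $f=Sg$. Since $\wt Hg=\wt HS^*f=H_uSS^*f=H_uf$, I get $\wt H^2g=S^*H_u^2f=s^2g$, that is $g\in E_{\wt H}(s)$; by the rank-one identity this reads $H_u^2g=s^2g+(g,u)u$. Feeding this into $H_u^2f=H_u^2(Sg)=H_u\big(S^*H_ug\big)$ and using the formula for $H_uS^*$, I obtain $H_u^2f=s^2f+(g,u)\,Su+(Su,H_ug)\,1$. As $f\in E$, the left side equals $s^2f$; comparing constant terms (note $Su=zu$ has none) kills $(Su,H_ug)$, and the surviving term forces $(g,u)=0$. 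Hence $H_u^2g=s^2g$, i.e. $S^*f=g\in E$, which is exactly near $S^*$-invariance. This step is the conceptual crux but remains a clean finite computation.

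With near invariance established, Hitt's theorem supplies an inner function $\theta$ and a generator $p$, an isometric multiplier on $\calK_\theta$, such that $E=p\calK_\theta$ and $h\mapsto ph$ is an isometry of $\calK_\theta$ onto $E$; this is the first assertion of the theorem. For the action formula I would exploit that $A:=s^{-1}H_u|_E$ is an anti-linear isometric involution on $E$, i.e. a conjugation (indeed $\norm{H_uf}^2=(H_u^2f,f)=s^2\norm{f}^2$ on $E$, and $A^2=I$), while $\calK_\theta$ carries its canonical conjugation $C_\theta h=\bar z\theta\bar h$ noted after \eqref{a2}. Transporting $A$ through the multiplier, $B:=(p\,\cdot)^{-1}A(p\,\cdot)$ is a conjugation on $\calK_\theta$, and \eqref{a00} is precisely the assertion $B=e^{i\varphi}C_\theta$. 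To pin this down I would first compute $H_up$ directly and match it with the image $se^{i\varphi}p\,\bar z\theta$ of the generator $h=1$, thereby fixing the phase $e^{i\varphi}$, and then propagate the identity to all of $\calK_\theta$ using $H_uS=S^*H_u$ to pass between the generator and the rest of $\calK_\theta$ through the compressed-shift structure.

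The main obstacle is this last step: showing that a \emph{single} unimodular constant $e^{i\varphi}$ serves simultaneously for every $h\in\calK_\theta$, i.e. that the conjugation $B$ induced by $H_u$ agrees with the canonical $C_\theta$ up to one global phase rather than a phase that drifts as one builds up $\calK_\theta$ from its generator. The near-invariance argument is elementary and Hitt's theorem is quoted; by contrast, synchronising the anti-linear action of $H_u$, the isometric-multiplier property of $p$, and the model-space conjugation $C_\theta$ demands care. I would also track the Frostman-shift ambiguity in the pair $(p,\theta)$ to ensure that \eqref{a00} is stated for a compatible choice of these parameters.
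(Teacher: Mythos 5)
Your first step rests on a computational error that turns out to be fatal to the whole strategy. The identity you invoke for $H_uS^*$ is wrong: writing $\overline{S^*\phi}=z\overline{\phi}-z\overline{\phi(0)}$ on $\bbT$ and using $P(zF)=zPF+\wh F(-1)\1$, one gets
$$
H_uS^*\phi=SH_u\phi+(Su,\phi)\1-\overline{\phi(0)}\,Su,
$$
so your formula is missing the term $-\overline{\phi(0)}\,Su$. In your application $\phi=H_ug$, and $\overline{(H_ug)(0)}=\overline{(u,g)}=(g,u)$ by \eqref{b5}, so the missing term is exactly $-(g,u)Su$, which cancels the term $(g,u)Su$ produced by $SH_u^2g=s^2f+(g,u)Su$. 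With the correct identity your computation collapses to $H_u^2f=s^2f+(Su,H_ug)\1$, which only yields $(Su,H_ug)=0$ and gives no control whatsoever on $(g,u)$. Hence near $S^*$-invariance is not established. Moreover, this is not a repairable slip, because the statement you are aiming at is false in the generality you claim it: the paper points out (citing an example in \cite{A}) that one can have $E_{H_u}(s)=p\calK_\theta$ with $p(0)=0$, so that every element of $E_{H_u}(s)$ vanishes at $0$. If your implication ($f\in E$, $f(0)=0\Rightarrow S^*f\in E$) held there, the subspace would be genuinely $S^*$-invariant, hence by Beurling a model space; but every nontrivial model space $\calK_\psi$ contains the reproducing kernel $1-\overline{\psi(0)}\psi$, which does not vanish at $0$ --- a contradiction. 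Note also that Hitt's theorem, as stated in the paper, applies only to subspaces with $M\not\perp\1$, so you could not invoke it in this case anyway.

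This is precisely why the paper's proof splits into two cases. When $E_{H_u}(s)\not\perp\1$, near invariance is proved by pairing the commutator identity \eqref{b4} against elements of $E_{H_u}(s)$: the hypothesis enters through the existence of $h$ with $(h,\1)\neq 0$, which gives $(u,H_uh)=s^2(h,\1)\neq0$ and thereby forces $(f,Su)=0$ for all $f\in E_{H_u}(s)\cap\1^\perp$ --- this is the ingredient your computation cannot produce. When $E_{H_u}(s)\perp\1$, an entirely different argument is required: the paper conjugates by the unitary $U_\mu$ of a M\"obius map sending a non-common zero $\alpha$ of the subspace to $0$, checks that $U_\mu H_uU_\mu=H_w$ is again a Hankel operator (Lemma~\ref{lma.c1}), and reduces to the first case; your proposal contains nothing covering this situation. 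Two further gaps: Hitt's theorem gives $E=pN$ with $N$ either a model space or $N=\calH^2$, and the latter must be excluded (the paper does this in Section~\ref{sec.b3}); and your plan for \eqref{a00} is incomplete at exactly the point you flag --- an abstract conjugation argument cannot single out $e^{i\varphi}C_\theta$, since a model space of dimension at least $2$ carries many conjugations. The paper instead identifies the projection of $u$ onto the Schmidt subspace as $c\,pS^*\theta$ (using the near-invariance inclusion \eqref{d3a} and Lemma~\ref{lma.b1}), evaluates $\abs{c}=s\abs{p(0)}$ by a norm computation, and then propagates $H_up=se^{i\varphi}p\overline{z}\theta$ to all of $p\calK_\theta$ by the identity $H_u(ph)=P(\overline{h}\,H_up)$, with a Frostman shift to remove the auxiliary assumption $\theta(0)=0$.
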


\textbf{Remarks:}

\begin{enumerate}[1.]
\item
By \eqref{a2}, the combination $\overline{z} \theta \overline{h}$ in \eqref{a00} is in $\calH^2$; in fact, it is in $\calK_\theta$. 
\item
The constant $e^{i\varphi}$ depends on the choice of the 
parameters $p$, $\theta$ in the representation $E_{H_u}(s)=p\calK_\theta$. 
In particular, by choosing a unimodular constant in the definition of $p$ and $\theta$, 
one can achieve $e^{i\varphi}=1$ in \eqref{a00}. 
\item
For an inner function $\theta$, consider the Hankel operator $H_{S^*\theta}$. 
It is not difficult to see that 
$$
\Ran H_{S^*\theta}=E_{H_{S^*\theta}}(1)=\calK_\theta
$$ 
and the action 
of $H_{S^*\theta}$ on $\calK_\theta$ is given by the anti-linear involution 
$$
H_{S^*\theta} h= \overline{z}\theta\overline{h}, \quad h\in \calK_\theta\,.
$$
Comparing with Theorem~\ref{thm1}, we see that 
such Hankel operators can be regarded as the ``simplest" ones from the point of view of 
our analysis: they have only one non-trivial Schmidt subspace  and one can choose
 $p=\1$. Here and in what follows, $\1$ is the function  identically equal to $1$ in $\calH^2$. 
\item
Denoting by $T_p$ the Toeplitz operator with the symbol $p$ (which in this case, by the 
analyticity of $p$, coincides with the operator of multiplication by $p$), we
can rewrite formula \eqref{a00} as 
$$
H_u T_p=s e^{i\varphi}T_p H_{S^*\theta} \quad \text{ on $\calK_\theta$.}
$$
\item
In \cite{A}, formula \eqref{a00} was  discussed only in the case $\theta(0)=0$. 
This case is important because condition $\theta(0)=0$ is equivalent to $\1\in\calK_\theta$
and thus to $p\in p\calK_\theta$. In fact, in this case for every $h\in \calK_\theta$ we have
\[
(ph,\overline{p(0)}p)=p(0)(ph,p)
=p(0)(h,\1)=p(0)h(0)=(ph,\1), 
\label{a3}
\]
and therefore $\overline{p(0)}p$ is the orthogonal projection of $\1$ onto
the subspace $p\calK_\theta$. 
\item
There seems to be a close analogy between Theorem~\ref{thm1} and the structure of Toeplitz eigenspaces. 
Let $v\in L^\infty(\bbT)$ and let $T_v$ be the Toeplitz operator with the symbol $v$. 
Then (see \cite{Hayashi}) all eigenspaces of $T_v$ have the form $p\calK_\theta$. 
In fact, Toeplitz operators and operators of the form $H_u^2$ satisfy similar commutation relations, 
see Remark~\ref{rmk} below.  
\end{enumerate}
\subsection{Hitt's theorem}

A closed subspace $M\subset \calH^2$ is called \emph{nearly $S^*$-invariant}, 
if  $M\not\perp\1$ and 
\[
f\in M, \quad f\perp \1 \quad \Rightarrow \quad S^*f\in M. 
\label{a1}
\]
The proof of Theorem~\ref{thm1} given in the present paper relies on the following
fundamental result by D.~Hitt \cite{Hitt} (see also \cite{Sarason}). 

\begin{theorem}\label{thm2}\cite{Hitt}
Let $M\subset \calH^2$ be a non-trivial nearly $S^*$-invariant subspace. 
Then $M=pN$, where $N\subset \calH^2$ is an $S^*$-invariant subspace
and $p$ is an isometric multiplier on $N$. 
\end{theorem}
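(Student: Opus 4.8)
The plan is to construct $p$ as the extremal function of $M$, to use near $S^*$-invariance to run a division algorithm writing every $f\in M$ as $p$ times an $\calH^2$ function, and to recognise the set of quotients as an $S^*$-invariant space; the isometry of the multiplier $p$ will be the genuine difficulty.

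First I would isolate $p$. Since $M\not\perp\1$, the bounded functional $f\mapsto f(0)=(f,\1)$ is nonzero on $M$, so $M_0:=M\cap z\calH^2=\{f\in M: f(0)=0\}$ has codimension one in $M$. Let $p$ be the unit vector spanning $M\ominus M_0$, normalised by $p(0)>0$. Orthogonality to $M_0$ gives the reproducing identity $f(0)=(f,p)\,p(0)$ for all $f\in M$, equivalently $f-\tfrac{f(0)}{p(0)}p\in M_0$. Next comes the division algorithm. For $f\in M$ put $c_0=(f,p)$; then $f-c_0 p\in M_0$ vanishes at $0$ and is orthogonal to $\1$, so by \eqref{a1} we have $Af:=S^*(f-c_0p)\in M$, with $f=c_0 p+zAf$ and $\norm{f}^2=\abs{c_0}^2+\norm{Af}^2$ by Pythagoras (using $p\perp M_0$). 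Iterating, with $c_k=(A^kf,p)$, gives $f=p\sum_{k=0}^{n-1}c_kz^k+z^nA^nf$ and $\norm{f}^2=\sum_{k=0}^{n-1}\abs{c_k}^2+\norm{A^nf}^2$. Hence $\sum_k\abs{c_k}^2\le\norm{f}^2$, so $h_f:=\sum_{k\ge0}c_kz^k\in\calH^2$; since $\norm{A^nf}\le\norm{f}$ stays bounded and $z^nA^nf\to0$ pointwise in the disk, passing to the limit yields $f=p\,h_f$ with $\norm{h_f}\le\norm{f}$.

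Setting $N:=\{h_f:f\in M\}$, I then obtain $M=pN$, and the map $U\colon f\mapsto h_f$ is a linear contraction. A short computation shows $A^nf=p\,S^{*n}h_f$, that is $U A=S^*U$; since $Af\in M$ this gives $S^*N\subseteq N$, so $N$ is $S^*$-invariant (its closedness will come only once $U$ is known to be isometric).

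The main obstacle is to upgrade $U$ to an isometry, i.e.\ to prove $\norm{ph}=\norm{h}$ for $h\in N$, equivalently that the defect $Q(h):=\norm{ph}^2-\norm{h}^2\ge0$ vanishes. Here the extremal normalisation of $p$ enters decisively. The reproducing identity applied to $f=ph\in M$ reads $(ph,p)=h(0)=(h,\1)$, and combining this with $\norm{Af}^2=\norm{f}^2-\abs{(f,p)}^2$ on the $M$ side and $\norm{S^*h}^2=\norm{h}^2-\abs{(h,\1)}^2$ on the $N$ side shows that $Q$ is $S^*$-invariant, $Q(S^*h)=Q(h)$, while $Q(\1)=\norm{p}^2-1=0$. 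Thus $Q(h)=Q(S^{*n}h)$ for every $n$, and $S^{*n}h\to0$ in $\calH^2$; the remaining point is the continuity of $Q$ at $0$, which is exactly the boundedness of multiplication by $p$ on $N$ (equivalently the closedness of $N$). This is the analytic heart of Hitt's theorem and does not follow from soft functional analysis: I would establish it through the extremal property of $p$, showing that $\abs{p}^2$ serves as the reproducing weight for $N$ — the route taken via de Branges–Rovnyak spaces in Sarason's account — which forces $Q\equiv0$ and identifies $N$ as a genuine (closed) $S^*$-invariant space on which $p$ is an isometric multiplier.
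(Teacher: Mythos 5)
The paper does not prove Theorem~\ref{thm2}: it is imported from \cite{Hitt} (see also \cite{Sarason}) and used as a black box, so your proposal has to be judged as a self-contained proof of Hitt's theorem. Your first two paragraphs are correct and reproduce the standard opening of that proof: the extremal function $p$ spanning $M\ominus M_0$ with $p(0)\neq 0$, the reproducing identity $f(0)=(f,p)p(0)$, the division step $f=c_0p+zAf$ legitimised by \eqref{a1}, the Parseval-type identity $\norm{f}^2=\sum_{k<n}\abs{c_k}^2+\norm{A^nf}^2$, and the pointwise passage to the limit giving $f=ph_f$ with $\norm{h_f}\le\norm{f}$. The bookkeeping identities $(ph,p)=h(0)$, $Q(S^*h)=Q(h)$ and $Q(\1)=0$ are also right, and they package the problem cleanly.

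But the argument stops exactly where the theorem begins. Invariance of $Q$ together with $Q\ge 0$ and $Q(\1)=0$ cannot force $Q\equiv 0$ by iteration: $Q(h)=Q(S^{*n}h)=\norm{pS^{*n}h}^2-\norm{S^{*n}h}^2$, and while $\norm{S^{*n}h}\to 0$, the term $\norm{pS^{*n}h}^2=\norm{A^n(ph)}^2$ merely converges to $Q(h)$; equivalently, $z^nA^nf\to 0$ only weakly in $\calH^2$, and weak limits lose norm, so no contradiction arises when $Q(h)>0$. What is missing is precisely the bound $\norm{ph}\le C\norm{h}$ for $h\in N$ --- note that your algorithm yields only the opposite inequality $\norm{h}\le\norm{ph}$ --- and this is not a continuity point to be dispatched in a sentence: it is the analytic heart of the theorem, occupying the bulk of both known proofs (Hitt by a direct argument; Sarason by realising the quotients $h_f$ inside a de Branges--Rovnyak space $\calH(b)$, with $b$ constructed from $p$ via a Herglotz-type representation, and showing that this space embeds isometrically in $\calH^2$). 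Your closing sentence names that route but carries out none of it, so as it stands the proposal is an accurate, well-organised roadmap of the Hitt--Sarason proof in which the decisive step is replaced by a citation rather than an argument.
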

By Beurling's theorem, $N$ is either a model space or $N=\calH^2$; in the second case $p$ must be an inner function.

A partial converse of Hitt's theorem is obvious: if $p$ is an isometric multiplier
on an $S^*$-invariant subspace $N$, and $p(0)\not=0$ (i.e. $p\not\perp\1$), then $pN$ is nearly
$S^*$-invariant. However, if $p(0)=0$, then $pN$ is not nearly $S^*$-invariant.

Hitt's theorem seems to be closely related to Theorem~\ref{thm1}. 
However, in \cite{A} the authors were unable to use Hitt's theorem directly
(even though its key ideas were used in the proof). 
The reason for this is that the Schmidt subspaces $E_{H_u}(s)$ 
are \emph{not necessarily nearly $S^*$-invariant!} 
Indeed, the weight $p$ in the representation $E_{H_u}(s)=p\calK_\theta$ 
may vanish at zero (see e.g. Example in \cite[Section 6]{A}). 

This obstacle is overcome in the present paper through the use of conformal mapping. 
More precisely, our plan of the proof is as follows. 
At the first step we consider the case $E_{H_u}(s)\not\perp\1$. 
We prove that in this case $E_{H_u}(s)$ is nearly $S^*$-invariant and use 
Hitt's theorem to obtain the representation $E_{H_u}(s)=p\calK_\theta$. 
Some additional algebra yields the formula for the action of $H_u$. 

At the second step we consider the case $E_{H_u}(s)\perp\1$. 
We choose a point $\alpha$ in 
the unit disk such that $\alpha$ is not a common zero of all elements of $E_{H_u}(s)$. 
We then use a M\"obius map $\mu$ sending $\alpha$ to $0$ and consider the associated unitary operator $U_\mu$ 
on $\calH^2$. 
It is easy to check that  $U_\mu E_{H_u}(s)$ is a Schmidt subspace
of another bounded Hankel operator $H_w$ and that the point $0$ is not a common
zero of all elements of $E_{H_w}(s)$, i.e. $E_{H_w}(s)\not\perp\1$. 
This reduces the problem to the one considered at the first step of the proof. 

The proofs of this paper are self-contained, apart from the reliance on Hitt's theorem. 
It is informed by the intuition coming from \cite{A}, and in fact we reproduce some 
simple elements of the argument of \cite{A}.

\subsection{Linear Hankel operators}\label{sec.a5}

Here we rewrite Theorem~\ref{thm1} in terms of linear
(rather than anti-linear) Hankel operators on the Hardy space. 
We follow \cite[Appendix]{A} almost verbatim.
Let $J$ be the linear involution in $L^2(\bbT)$, 
$$
Jf(z)=f(\overline{z}), \quad z\in \bbT,
$$
and let $\bC$ be the anti-linear involution in $\calH^2$, 
$$
\bC f(z)=\overline{f(\overline{z})}, \quad z\in \bbT. 
$$
For a symbol $u\in \BMOA(\bbT)$, let us define the \emph{linear} Hankel operator $G_u$ in $\calH^2$ by 
$$
G_u f=P(u\cdot Jf), \quad f\in \calH^2. 
$$
We have
$$
G_u=H_u\bC, \quad G_u^*=\bC H_u, 
$$
and so from Theorem~\ref{thm1} we obtain
\begin{theorem}
Let $s$ be a singular value of $G_u$. 
Then there exists an inner function $\theta$ and an isometric multiplier $p$ on $\calK_\theta$ such that 
\begin{align*}
\Ker (G_u^*G_u-s^2I)&=\bC(p\calK_\theta), 
\\
\Ker (G_uG_u^*-s^2I)&=p\calK_\theta. 
\end{align*}
The action 
$$
G_u:\Ker (G_u^*G_u-s^2I)\to \Ker (G_uG_u^*-s^2I)
$$
is given by 
$$
G_u\bC(pf)=sp\theta\overline{f}, \quad
f\in \calK_\theta.
$$
\end{theorem}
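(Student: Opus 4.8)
The plan is to obtain this theorem as a direct corollary of Theorem~\ref{thm1}, using nothing beyond the factorisations $G_u=H_u\bC$, $G_u^*=\bC H_u$ and the involution property $\bC^2=I$. The first step is to rewrite the two positive operators associated with $G_u$ in terms of $H_u$: since $\bC^2=I$,
\[
G_uG_u^*=H_u\bC\,\bC H_u=H_u^2,\qquad
G_u^*G_u=\bC H_u\,H_u\bC=\bC H_u^2\bC .
\]
Because $\bC$ is a bijective involution and $s^2$ is real, $\bC H_u^2\bC$ and $H_u^2$ have the same eigenvalues; hence $s$ is a singular value of $G_u$ exactly when $s^2$ is an eigenvalue of $H_u^2$, i.e.\ when $E_{H_u}(s)\neq\{0\}$. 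This is precisely the hypothesis of Theorem~\ref{thm1}, which then furnishes an inner function $\theta$ and an isometric multiplier $p$ on $\calK_\theta$ with $E_{H_u}(s)=p\calK_\theta$.

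Next I would read off the two eigenspaces. The left one is immediate from the computation above:
\[
\Ker(G_uG_u^*-s^2I)=\Ker(H_u^2-s^2I)=E_{H_u}(s)=p\calK_\theta .
\]
For the right one, the key observation is that, $s^2$ being real, $G_u^*G_u-s^2I=\bC(H_u^2-s^2I)\bC$; since $\bC$ is a bijection with $\bC^{-1}=\bC$, applying it to the kernel equation gives
\[
\Ker(G_u^*G_u-s^2I)=\bC\,\Ker(H_u^2-s^2I)=\bC\bigl(p\calK_\theta\bigr),
\]
which is the claimed description of the space of right singular vectors.

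It remains to compute the action of $G_u$ from the right eigenspace to the left one. For $f\in\calK_\theta$, using $\bC^2=I$ once more,
\[
G_u\bC(pf)=H_u\bC\,\bC(pf)=H_u(pf).
\]
Thus the action of $G_u$ on $\bC(p\calK_\theta)$ coincides with the action of $H_u$ on $p\calK_\theta$, and substituting formula~\eqref{a00} — after normalising the unimodular constant to $e^{i\varphi}=1$ as in Remark~2 — yields the asserted formula for $G_u\bC(pf)$.

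I do not expect a genuine obstacle here: the argument is essentially a mechanical transfer of Theorem~\ref{thm1} through the relations $G_u=H_u\bC$ and $G_u^*=\bC H_u$. The only points that require care are those forced by the anti-linearity of $\bC$ — namely, that one may pass $\bC$ across the real scalar $s^2$, and that $\Ker(\bC A\bC)=\bC\,\Ker A$ for the bijection $\bC$ — together with the faithful transcription of \eqref{a00}, where the conjugation $h\mapsto\overline{z}\theta\overline{h}$ on $\calK_\theta$ must be carried unchanged through the identity $G_u\bC(pf)=H_u(pf)$.
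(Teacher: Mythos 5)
Your route is the same as the paper's: the paper obtains this theorem in one line from the identities $G_u=H_u\bC$, $G_u^*=\bC H_u$ together with Theorem~\ref{thm1}, which is exactly your reduction. Your treatment of the two kernels is correct and complete: since $\bC$ is an isometric anti-linear involution and $s^2$ is real, $G_uG_u^*=H_u^2$ and $G_u^*G_u=\bC H_u^2\bC=\bC(H_u^2-s^2I)\bC+s^2I$, whence $\Ker(G_uG_u^*-s^2I)=E_{H_u}(s)=p\calK_\theta$ and $\Ker(G_u^*G_u-s^2I)=\bC(p\calK_\theta)$.

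However, your final step asserts something that is not true. From $G_u\bC(pf)=H_u(pf)$ and \eqref{a00} you obtain
$$
G_u\bC(pf)=se^{i\varphi}\,p\,\overline{z}\,\theta\,\overline{f},
\qquad f\in\calK_\theta,
$$
and this is \emph{not} the displayed formula $G_u\bC(pf)=sp\theta\overline{f}$: the two differ by the factor $\overline{z}$. The phase $e^{i\varphi}$ can indeed be normalised to $1$ as in Remark~2, but no renormalisation removes $\overline{z}$: by Crofoot's theorem the only freedom in the representation $p\calK_\theta$ consists of unimodular constants and Frostman shifts, and the computation at the end of Section~\ref{sec.b} shows that these preserve the form $ph\mapsto se^{i\varphi}p\overline{z}\theta\overline{h}$. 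A concrete test: take $u=z=S^*\theta$ with $\theta=z^2$, $p=\1$, $s=1$; then $\Ker(G_uG_u^*-I)=\calK_{z^2}$ and $G_u\bC\1=G_u\1=z=\overline{z}\theta$, not $\theta=z^2$. So your computation is in fact the correct one, and what it proves is the statement with right-hand side $se^{i\varphi}p\overline{z}\theta\overline{f}$ (equivalently $se^{i\varphi}p\theta\overline{zf}$); the printed action formula contains a typo. The gap in your write-up is precisely the unexamined claim that substituting \eqref{a00} ``yields the asserted formula'' --- it does not, and a careful proof must either flag this discrepancy or correct the target formula; as a proof of the statement exactly as printed, the last step fails.
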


\subsection{Acknowledgements}
The authors are grateful to V.~Kapustin for useful discussions.

\section{The case $E_{H_u}(s)\not\perp\1$}\label{sec.b}

\subsection{Frostman shifts}\label{sec.frostman}
Let $\theta$ be an inner function; then 
(see e.g. \cite[Theorem 10]{Crofoot}) for any $\abs{\alpha}<1$ one has
$$
\calK_\theta=g_\alpha \calK_{\theta_\alpha},
$$
where 
\[
\theta_\alpha(z)=\frac{\alpha-\theta(z)}{1-\overline{\alpha}\theta(z)}, 
\quad
g_\alpha(z)=\frac{1-\overline{\alpha}\theta(z)}{\sqrt{1-\abs{\alpha}^2}},
\label{b00}
\]
and $g_\alpha$ is an isometric multiplier on $\calK_{\theta_\alpha}$. 

It follows that if $p$ is an isometric multiplier on $\calK_\theta$, then 
\[
p\calK_\theta=pg_\alpha \calK_{\theta_\alpha}, 
\label{b1}
\]
where $pg_\alpha$ is an isometric multiplier on $\calK_{\theta_\alpha}$. 
Conversely,  if 
$$
p\calK_\theta=\wt p\calK_{\wt\theta},
$$
where $p$ is an isometric multiplier on $\calK_\theta$ and $\wt p$ is an isometric multiplier on $\calK_{\wt\theta}$, 
then, again by \cite[Theorem 10]{Crofoot},
$$
\wt p=c_1 p g_\alpha, \quad \wt \theta=c_2 \theta_\alpha, 
$$
where $\alpha\in\bbD$ and $c_1$, $c_2$ are unimodular complex numbers.

\subsection{Some algebra of model spaces}
\begin{lemma}\label{lma.b1}
Let $\theta$ be an inner function in the unit disk. 
Then
$$
S^*(\calK_\theta\cap \1^\perp)=\calK_\theta\cap (S^*\theta)^\perp.
$$
\end{lemma}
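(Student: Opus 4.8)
The plan is to prove the two inclusions separately, after first reformulating both sides. Since $(h,\1)=h(0)$ for $h\in\calH^2$, the left-hand domain is $\calK_\theta\cap\1^\perp=\{h\in\calK_\theta: h(0)=0\}$, and on such $h$ the operator $S^*$ acts simply as multiplication by $\overline z$, with $S$ (multiplication by $z$) as a one-sided inverse: $S^*Sg=g$ for all $g$, and $SS^*h=h-h(0)\1=h$ when $h(0)=0$. Because $\calK_\theta$ is a model space, it is $S^*$-invariant, so the inclusion $S^*(\calK_\theta\cap\1^\perp)\subseteq\calK_\theta$ comes for free; the real content of the lemma is to match the two orthogonality conditions, namely $h\perp\1$ on the left and $g\perp S^*\theta$ on the right.

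For the inclusion ``$\subseteq$'', I would take $h\in\calK_\theta$ with $h(0)=0$ and compute, using the adjoint relation together with $SS^*h=h$, that $(S^*h,S^*\theta)=(SS^*h,\theta)=(h,\theta)$. This vanishes because $\theta=\theta\cdot\1\in\theta\calH^2$ while $h\in\calK_\theta\perp\theta\calH^2$. Hence $S^*h\in\calK_\theta\cap(S^*\theta)^\perp$, and this direction is essentially immediate.

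The reverse inclusion ``$\supseteq$'' is the crux. Given $g\in\calK_\theta$ with $g\perp S^*\theta$, set $h=Sg=zg$; then $S^*h=g$ and $h(0)=0$ automatically, so everything reduces to showing $zg\in\calK_\theta$. This is exactly where the argument could fail, since $\calK_\theta$ is \emph{not} $S$-invariant in general, and it is precisely here that the hypothesis $g\perp S^*\theta$ must be used. I would check $zg\perp\theta\calH^2$ directly: for $f\in\calH^2$, writing $f=f(0)\1+zS^*f$ and $\overline z\theta=S^*\theta+\theta(0)\overline z$, the $L^2$ pairing $(zg,\theta f)=(g,\overline z\theta f)$ collapses, after discarding the term $(g,\theta\,S^*f)=0$, to the multiple $\overline{f(0)}\,[(g,S^*\theta)+\overline{\theta(0)}\,(g,\overline z)]$ of $(g,S^*\theta)+\overline{\theta(0)}\,(g,\overline z)$; the first summand is zero by hypothesis and the second because $(g,\overline z)=\int_\bbT g\,z\,dm=0$ for $g\in\calH^2$. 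Equivalently, one can verify membership through the criterion \eqref{a2}, reducing $zg\in\calK_\theta$ to the vanishing of the zeroth Fourier coefficient of $\overline z\theta\overline g\in\calK_\theta$, which is again the identity $(g,\overline z\theta)_{L^2}=0$. Either way, once $zg\in\calK_\theta$ is established we get $g=S^*(zg)\in S^*(\calK_\theta\cap\1^\perp)$, completing the proof. The main obstacle is thus this single orthogonality computation, which amounts to $S\bigl(\calK_\theta\cap(S^*\theta)^\perp\bigr)\subseteq\calK_\theta$.
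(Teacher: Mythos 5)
Your proof is correct and follows essentially the same route as the paper: both directions hinge on the same two computations, namely $(S^*h,S^*\theta)=(h,\theta)=0$ for the forward inclusion, and showing $Sg\perp\theta\calH^2$ via the decomposition $f=f(0)\1+zS^*f$ for the reverse one (the paper writes $f=c\1+Sw$ and gets $(Sg,\theta)=(g,S^*\theta)$ in one step from the adjoint relation, avoiding your extra term $\overline{\theta(0)}(g,\overline z)$, but this is a cosmetic difference). The only structural distinction is that the paper proves the equivalent $S$-conjugated identity $\calK_\theta\cap\1^\perp=S(\calK_\theta\cap(S^*\theta)^\perp)$ while you argue on the stated identity directly; the content is identical.
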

\begin{proof}
It suffices to prove the identity 
$$
\calK_\theta\cap \1^\perp=S(\calK_\theta\cap (S^*\theta)^\perp).
$$
Let $h\in \calK_\theta\cap \1^\perp$. 
Write $h=Sg$, with $g=S^*h\in \calK_\theta$. Then 
$$
(g,S^*\theta)=(Sg,\theta)=(h,\theta)=0, 
$$
and so $g\in \calK_\theta\cap (S^*\theta)^\perp$. 
Conversely, let $g\in \calK_\theta\cap (S^*\theta)^\perp$. 
Write
any $f\in \calH^2$ as $f=c\1+Sw$, $w\in\calH^2$; then 
$$
(Sg,\theta f)
=
\overline{c}(Sg,\theta)+(Sg,\theta Sw)
=
\overline{c}(g,S^*\theta)+(g,\theta w)=0, 
$$
and so $Sg\in \calK_\theta$. Clearly, $Sg\perp\1$, and so $Sg\in \calK_\theta\cap\1^\perp$. 
\end{proof}

\subsection{Some identities for $H_u$}\label{sec.b2}
Hankel operators  $H_u$ satisfy the key identity
\[
S^*H_u=H_uS;
\label{b2}
\]
in fact, this identity characterises the class of all Hankel operators. 
Recalling that 
$$
SS^*=I-(\cdot,\1)\1,
$$
from \eqref{b2} and  from $u=H_u\1$ one obtains
\[
S^*H_u^2S=H_u^2-(\cdot,u)u.
\label{b3}
\]
Multiplying \eqref{b3} by $S^*$ on the right and rearranging, we arrive at
\[
S^*H_u^2-H_u^2S^*=(\cdot,\1)S^*H_uu-(\cdot,Su)u. 
\label{b4}
\]
This relation is key to checking the definition \eqref{a1} of nearly $S^*$-invariance. 
Finally, it is straightforward to check that $H_u$ satisfies 
\[
(H_uf,g)=(H_ug,f), \quad f,g\in \calH^2.
\label{b5}
\]
\begin{remark}\label{rmk}
Observe that Toeplitz operators $T_v$ on $\calH^2$ satisfy the commutation relation
$$
S^*T_vS=T_v\, ;
$$
formula \eqref{b3} can be viewed as a rank one perturbation of this relation.
\end{remark}

\subsection{Proof of the representation $E_{H_u}(s)=p\calK_\theta$ in the case $E_{H_u}(s)\not\perp\1$}\label{sec.b3}

Here we assume that $E_{H_u}(s)\not\perp\1$ and prove the first part of Theorem~\ref{thm1}.

1) 
For $f\in E_{H_u}(s)\cap\1^\perp$ and $g\in E_{H_u}(s)$ we have
$$
(S^*H_u^2f,g)-(H_u^2S^*f,g)
=
s^2(S^*f,g)-(S^*f,H_u^2g)
=
s^2(S^*f,g)-s^2(S^*f,g)=0
$$
and therefore, by \eqref{b4}, 
$$
(f,Su)(u,g)=0.
$$
By assumption, there exists an element $h\in E_{H_u}(s)$ with 
$(h,\1)\not=0$. Take $g=H_uh$; then, using \eqref{b5},
$$
(u,g)=(H_u\1,g)=(H_ug,\1)=(H_u^2h,\1)=s^2(h,\1)\not=0,
$$
and so $(f,Su)=(S^*f,u)=0$. 
Now applying \eqref{b4} to $f$, we find
$$
(H_u^2-s^2 I)S^*f=0,
$$
i.e. $S^*f\in E_{H_u}(s)$. 
Putting this together, we see that we have checked the inclusion 
\[
S^*(E_{H_u}(s)\cap \1^\perp)\subset E_{H_u}(s)\cap u^\perp. 
\label{d3a}
\]

2) 
By Hitt's theorem,  $E_{H_u}(s)=pN$, where $p$ is an isometric multiplier on $N$
and $N$ is either a model space or $N=\calH^2$; 
we need to eliminate the second possibility. 
Suppose $N=\calH^2$. 
Since by assumption $E_{H_u}(s)\not\perp\1$, we see that $p(0)\not=0$. 
Then 
$$
p\calH^2\cap \1^\perp=zp\calH^2. 
$$
It follows that 
$$
S^*(p\calH^2\cap \1^\perp)=p\calH^2. 
$$
Comparing with \eqref{d3a}, we conclude that $u\perp E_{H_u}(s)$. 
Then for any $h\in E_{H_u}(s)$, 
$$
s^2(h,\1)=(H_u^2 h,\1)=(H_u\1,H_uh)=(u,H_uh)=0,
$$
and so $E_{H_u}(s)\perp\1$, contrary to our assumption. 

\subsection{Proof of \eqref{a00} in the case $E_{H_u}(s)\not\perp\1$}

Here we assume $E_{H_u}(s)\not\perp\1$ and prove formula \eqref{a00} for the action of $H_u$ on $E_{H_u}(s)=p\calK_\theta$.

1)
Let us first assume that $\theta(0)=0$; then 
$\1\in \calK_\theta$,  $p\in p\calK_\theta$ and by 
\eqref{a3} the element $\overline{p(0)}p$ is the orthogonal projection of $\1$ onto $p\calK_\theta$. 
Next, let $u_s$ be the orthogonal projection of $u$ onto $E_{H_u}(s)$. 
Since $H_u$ commutes with $H_u^2$, and therefore
with the operator of the orthogonal projection onto $E_{H_u}(s)$, we see that 
\[
u_s=H_u(\overline{p(0)}p)=p(0)H_up.
\label{d9}
\] 
Further, by \eqref{d3a}, we have
\[
S^*(p\calK_\theta \cap p^\perp)\subset p\calK_\theta\cap u_s^\perp.
\label{d3aa}
\]
Using Lemma~\ref{lma.b1}, 
$$
S^*(p \calK_{\theta}\cap p^\perp)
=
S^*(p(\calK_{\theta}\cap\1^\perp))
=
pS^*(\calK_{\theta}\cap\1^\perp)
=
p(\calK_\theta\cap(S^*\theta)^\perp)
=
p\calK_\theta\cap (pS^*\theta)^\perp.
$$
Comparing this with \eqref{d3aa}, we obtain 
\[
u_s=cpS^*\theta
\label{d7}
\]
with some constant $c$. Putting this together with \eqref{d9}, we get
\[
H_up=\frac{c}{p(0)}pS^*\theta.
\label{d8}
\]
In order to evaluate $c$, 
let us compute the norms on both sides of \eqref{d7}:
\begin{align*}
\norm{u_s}^2&=\norm{p(0)H_u p}^2=\abs{p(0)}^2
(H_u^2p,p)=s^2\abs{p(0)}^2(\1,\1)=s^2\abs{p(0)}^2,
\\
\norm{cpS^*\theta}&=\abs{c}\norm{pS^*\theta}=\abs{c}\norm{S^*\theta}=\abs{c}\norm{\overline{z}\theta}=\abs{c}.
\end{align*}
It follows that $\abs{c}=s\abs{p(0)}$. Substituting this into \eqref{d8}, we obtain 
$$
H_u p=se^{i\varphi}p\overline{z}\theta
$$
with some unimodular complex number $e^{i\varphi}$. 
This is exactly the required formula \eqref{a00} for $h=\1$. 
From here we easily get 
formula \eqref{a00} for a general $h\in \calK_\theta$:
$$
H_u(p h)
=P(\overline{h}u\overline{p})
=P(\overline{h}P(u\overline{p}))
=P(\overline{h}H_u p)
=se^{i\varphi}P(\overline{h}p\overline{z}\theta)
=s e^{i\varphi}\overline{h}p\overline{z}\theta. 
$$

2) 
Now let $E_{H_u}(s)=p\calK_{\theta}$, with $\theta(0)\not=0$. 
Choose $\alpha=\theta(0)$ and write
$$
p\calK_\theta=pg_\alpha \calK_{\theta_\alpha}
$$
according to \eqref{b1}.
Since $\theta_\alpha(0)=0$, by the previous step of the proof we have
$$
H_u(pg_\alpha h)=se^{i\varphi} \overline{z}pg_\alpha \theta_\alpha\overline{h}, 
\quad h\in \calK_{\theta_\alpha}.
$$
Directly from the definitions \eqref{b00} one has
$$
g_\alpha \theta_\alpha=-\theta \overline{g_\alpha}, 
$$
and so, denoting $g_\alpha h=v\in \calK_\theta$, we obtain 
$$
H_u(pv)=-se^{i\varphi}\overline{z}\theta\overline{v},\quad v\in \calK_\theta,
$$
as required.

\qed

\section{The case $E_{H_u}(s)\perp\1$}\label{sec.c}

\subsection{Conformal maps}\label{sec.c1}

For $\alpha\in\bbD$, let $\mu:\bbD\to\bbD$ be the conformal map
$$
\mu(z)=\frac{\alpha-z}{1-\overline{\alpha}z}, 
$$
and consider the corresponding unitary operator on the Hardy class,
$$
U_\mu f(z)=\frac{\sqrt{1-\abs{\alpha}^2}}{1-\overline{\alpha}z}f(\mu(z)), \quad z\in\bbD.
$$
Observe that $\mu$ is an involution, $\mu\circ\mu=\text{id}$ and $U_\mu^2=I$. 

\begin{lemma}\label{lma.c1}
Let $U_\mu$ be as defined above, and let $u\in\BMOA(\bbT)$. 
Then 
$$
U_\mu H_uU_\mu=H_{w}, \quad\text{ where }\quad w=-S^*((Su)\circ\mu). 
$$
\end{lemma}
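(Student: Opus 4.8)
The plan is to use the characterisation of Hankel operators by the commutation relation \eqref{b2}: I shall show that the bounded anti-linear operator $A:=U_\mu H_u U_\mu$ satisfies $S^*A=AS$, so that $A=H_w$ for some symbol $w$, and then identify $w$ as $A\1$ (recall that for any Hankel operator $H_w\1=P(w)=w$).

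First I would record how $U_\mu$ conjugates the shift. Writing $k_\alpha(z)=\sqrt{1-\abs{\alpha}^2}/(1-\overline\alpha z)$, so that $U_\mu f(z)=k_\alpha(z)f(\mu(z))$ and $U_\mu\1=k_\alpha$, a direct computation using the involution $\mu\circ\mu=\mathrm{id}$ gives $U_\mu S U_\mu f(z)=\mu(z)\,k_\alpha(z)k_\alpha(\mu(z))\,f(z)$. The key simplification is the identity $1-\overline\alpha\mu(z)=(1-\abs{\alpha}^2)/(1-\overline\alpha z)$, which yields $k_\alpha(z)k_\alpha(\mu(z))=1$ and hence $U_\mu S U_\mu=T_\mu$, multiplication by the inner function $\mu$. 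Taking adjoints (recall $U_\mu^*=U_\mu$ since $U_\mu$ is a self-adjoint unitary) gives $U_\mu S^*U_\mu=T_{\overline\mu}$, i.e. $S^*U_\mu=U_\mu T_{\overline\mu}$.

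Next I would use the intertwining $H_u T_\mu=T_{\overline\mu}H_u$. Since $\mu$ is analytic this follows from $H_uS^n=(S^*)^nH_u$ (iterate \eqref{b2}) together with the anti-linearity of $H_u$ applied to the Taylor expansion of $\mu$; equivalently, $H_uT_\mu f-T_{\overline\mu}H_u f=P(\overline\mu\,(I-P)(u\overline f))=0$, because $\overline\mu$ carries only non-positive frequencies while $(I-P)(u\overline f)$ carries only negative ones. Combining the two displayed relations, $S^*A=U_\mu T_{\overline\mu}H_u U_\mu=U_\mu H_u T_\mu U_\mu=AS$, where the last step uses $T_\mu U_\mu=U_\mu S$. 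By the characterisation of Hankel operators via \eqref{b2}, $A=H_w$ with $w=A\1=U_\mu H_u k_\alpha$.

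It remains to compute $w$. Expanding the reproducing kernel as a geometric series and using anti-linearity, $H_u k_\alpha=\sqrt{1-\abs{\alpha}^2}\sum_{n\geq0}\alpha^n(S^*)^n u=\sqrt{1-\abs{\alpha}^2}\,(I-\alpha S^*)^{-1}u$; solving $(I-\alpha S^*)h=u$ in $\calH^2$ gives the closed form $(H_u k_\alpha)(z)=\sqrt{1-\abs{\alpha}^2}\,(zu(z)-\alpha u(\alpha))/(z-\alpha)$. Applying $U_\mu$ and simplifying with $\mu(z)-\alpha=-z(1-\abs{\alpha}^2)/(1-\overline\alpha z)$ collapses all the Möbius prefactors, leaving $w(z)=-\big((Su)(\mu(z))-\alpha u(\alpha)\big)/z$. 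Since $\mu(0)=\alpha$, the constant $\alpha u(\alpha)$ equals $((Su)\circ\mu)(0)$, so the right-hand side is exactly $-S^*\big((Su)\circ\mu\big)$, as claimed. The main obstacle is this last step: deriving the resolvent formula for $(I-\alpha S^*)^{-1}u$ and then carrying out the Möbius bookkeeping so that the prefactors cancel and the constant term reassembles into a single application of $S^*$; the earlier commutation identities are routine once the relation $k_\alpha(z)k_\alpha(\mu(z))=1$ is in hand.
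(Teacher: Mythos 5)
Your proof is correct, but it takes a genuinely different route from the paper's. The paper proves the lemma in a single computation: it tests the symmetric form $(H_uh_1,h_2)=(u,h_1h_2)$ against pairs $h_1,h_2\in\calH^2$, changes variables $e^{it}\mapsto\mu(e^{it})$ on the circle, and recognises the Jacobian factor through the identity $\tfrac{1-\abs{\alpha}^2}{\abs{1-\overline{\alpha}z}^2}=-k_\alpha(z)^2\,\overline{\overline{z}\mu(z)}$ (writing, as you do, $k_\alpha(z)=\sqrt{1-\abs{\alpha}^2}/(1-\overline{\alpha}z)$); this yields the Hankel property of $U_\mu H_u U_\mu$ and its symbol simultaneously. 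You instead decouple the two tasks: first the conjugation identities $U_\mu SU_\mu=T_\mu$ (via $k_\alpha(z)k_\alpha(\mu(z))=1$) and the intertwining $H_uT_\mu=T_{\overline{\mu}}H_u$ give $S^*A=AS$ for $A=U_\mu H_uU_\mu$, whence $A=H_w$ with $w=A\1$ by the characterisation of Hankel operators via \eqref{b2}, which the paper states in Section~\ref{sec.b2} but does not prove; then you identify $w=U_\mu H_uk_\alpha$ through the resolvent formula $(I-\alpha S^*)^{-1}u$ and the M\"obius cancellations. I checked the computational core and it is sound: $1-\overline{\alpha}\mu(z)=(1-\abs{\alpha}^2)/(1-\overline{\alpha}z)$ gives $k_\alpha(z)k_\alpha(\mu(z))=1$; anti-linearity correctly turns $\overline{\alpha}^n$ into $\alpha^n$ in the series for $H_uk_\alpha$; the closed form $(H_uk_\alpha)(z)=\sqrt{1-\abs{\alpha}^2}\,(zu(z)-\alpha u(\alpha))/(z-\alpha)$ is right; and $\mu(z)-\alpha=-z(1-\abs{\alpha}^2)/(1-\overline{\alpha}z)$ collapses the prefactors to $-1/z$, reassembling the answer as $-S^*((Su)\circ\mu)$ since $((Su)\circ\mu)(0)=\alpha u(\alpha)$. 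The trade-off: the paper's argument is shorter and self-contained, while yours leans on the unproved (though standard) converse of \eqref{b2}, together with boundedness of $A$ to place $w$ in $\BMOA$; in exchange, your argument cleanly separates the soft, reusable statement---conjugation by $U_\mu$ preserves the Hankel class because it conjugates $S$ to $T_\mu$---from the bookkeeping that identifies the symbol. One small point of rigour, at the same level of informality as the paper's own manipulations: the frequency-support argument for $P(\overline{\mu}\,(I-P)(u\overline{f}))=0$ and the termwise application of $H_u$ to the series for $k_\alpha$ are best carried out for polynomial $f$ and then extended by boundedness and density.
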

\begin{proof}
Computing the Jacobian of the change of variable $e^{it}\mapsto \mu(e^{it})$ on the unit circle, we get for
$h_1,h_2\in \calH^2$
$$
(H_uh_1,h_2)=(u,h_1h_2)=(u\circ \mu,(h_1\circ \mu) (h_2\circ \mu) \tfrac{1-\abs{\alpha}^2}{\abs{1-\overline{\alpha}z}^2}). 
$$
Writing for $\abs{z}=1$
$$
\frac{1-\abs{\alpha}^2}{\abs{1-\overline{\alpha}z}^2}
=
-\left(\frac{\sqrt{1-\abs{\alpha}^2}}{1-\overline{\alpha}z}\right)^2
\overline{(\overline{z}\mu(z))}, 
$$
we get 
\begin{multline*}
(H_uh_1,h_2)=-\bigl(\overline{z}\mu(z)u\circ \mu, (U_\mu h_1)(U_\mu h_2)\bigr)
=
\bigl(w, (U_\mu h_1) (U_\mu h_2)\bigr)
\\
=
(H_{w}U_\mu h_1, U_\mu h_2)
=(U_\mu H_{w}U_\mu h_1,h_2). \qedhere
\end{multline*}
\end{proof}

\begin{lemma}\label{lma.c2}
Let $\theta$ be an inner function and let $p$ be an isometric multiplier on $K_\theta$. 
Then $U_\mu(p\calK_\theta)=(p\circ\mu)\calK_{\theta\circ\mu}$, 
and $p\circ\mu$ is an isometric multiplier on $\calK_{\theta\circ\mu}$. 
\end{lemma}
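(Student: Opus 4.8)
The plan is to verify the two assertions—the subspace identity $U_\mu(p\calK_\theta)=(p\circ\mu)\calK_{\theta\circ\mu}$ and the isometric-multiplier property of $p\circ\mu$—by transporting everything through the unitary $U_\mu$. The guiding principle is that $U_\mu$ is a unitary intertwiner that conjugates multiplication-type operations into composition with $\mu$, so all the structural features should pass through cleanly. First I would record that $\theta\circ\mu$ is again an inner function: $\theta$ is inner and $\mu$ maps $\bbT$ to $\bbT$, so $\abs{(\theta\circ\mu)(z)}=1$ a.e.\ on $\bbT$, and $\theta\circ\mu$ is analytic in $\bbD$ as a composition of the analytic $\theta$ with the disk automorphism $\mu$. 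Hence $\calK_{\theta\circ\mu}$ is a legitimate model space.

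Next I would establish the subspace identity at the level of the model spaces themselves, namely $U_\mu\calK_\theta=\calK_{\theta\circ\mu}$. The cleanest route is through the characterisation \eqref{a2}: $h\in\calK_\theta$ iff $h\in\calH^2$ and $\overline z\,\theta\,\overline h\in\calH^2$. Applying $U_\mu$ preserves membership in $\calH^2$ (since $U_\mu$ is unitary on $\calH^2$), so the work is to check that the auxiliary condition $\overline z\,\theta\,\overline h\in\calH^2$ transforms into $\overline z\,(\theta\circ\mu)\,\overline{U_\mu h}\in\calH^2$. This is where the weight $\tfrac{\sqrt{1-\abs\alpha^2}}{1-\overline\alpha z}$ in the definition of $U_\mu$ must be tracked carefully, using the factorisation of the Poisson-type kernel that already appears in the proof of Lemma~\ref{lma.c1}, namely $\tfrac{1-\abs\alpha^2}{\abs{1-\overline\alpha z}^2}=-\bigl(\tfrac{\sqrt{1-\abs\alpha^2}}{1-\overline\alpha z}\bigr)^2\overline{(\overline z\,\mu(z))}$. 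I would compute $U_\mu(\overline z\,\theta\,\overline h)$ and match it, up to a unimodular factor, against $\overline z\,(\theta\circ\mu)\,\overline{U_\mu h}$; the bracketed identity supplies exactly the cancellation needed. Because $\mu$ is an involution and $U_\mu^2=I$, the reverse inclusion is automatic, giving equality $U_\mu\calK_\theta=\calK_{\theta\circ\mu}$.

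With the model-space identity in hand, the subspace statement for $p\calK_\theta$ follows from the elementary intertwining relation
\[
U_\mu(pf)=(p\circ\mu)(U_\mu f),\quad f\in\calH^2,
\label{c-intertw}
\]
which holds because $U_\mu$ acts by composition with $\mu$ together with multiplication by the scalar weight, and that weight is absorbed consistently when a multiplier $p$ is present. Applying \eqref{c-intertw} to $f$ ranging over $\calK_\theta$ and using $U_\mu\calK_\theta=\calK_{\theta\circ\mu}$ yields $U_\mu(p\calK_\theta)=(p\circ\mu)\calK_{\theta\circ\mu}$. Finally, the isometric-multiplier property of $p\circ\mu$ on $\calK_{\theta\circ\mu}$ is immediate: for $g=U_\mu f\in\calK_{\theta\circ\mu}$ with $f\in\calK_\theta$, the unitarity of $U_\mu$ gives $\norm{(p\circ\mu)g}=\norm{U_\mu(pf)}=\norm{pf}=\norm f=\norm g$, and membership of $(p\circ\mu)g$ in $\calH^2$ is inherited from $pf\in\calH^2$ through $U_\mu$.

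I expect the main obstacle to be the middle step, where the auxiliary condition $\overline z\,\theta\,\overline h\in\calH^2$ must be pushed through $U_\mu$: this is the one place where the explicit weight interacts nontrivially with complex conjugation, and the sign and unimodular factors arising from $\overline{(\overline z\,\mu(z))}$ must be handled so that the condition transforms into its analogue for $\theta\circ\mu$ rather than something off by a non-unimodular factor. Everything else—the inner-ness of $\theta\circ\mu$, the intertwining \eqref{c-intertw}, and the norm computation—is routine bookkeeping once that transformation is pinned down.
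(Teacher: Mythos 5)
Your proposal is correct, but the route you take for the central step is genuinely different from the paper's. The paper's entire proof is two lines: the intertwining relation $U_\mu(pf)=(p\circ\mu)\,U_\mu f$ gives both $U_\mu(p\calK_\theta)=(p\circ\mu)U_\mu(\calK_\theta)$ and, taking $p=\theta$, $U_\mu(\theta\calH^2)=(\theta\circ\mu)\calH^2$; since $U_\mu$ is unitary on $\calH^2$, it carries the orthogonal decomposition $\calH^2=\calK_\theta\oplus\theta\calH^2$ onto $\calH^2=U_\mu(\calK_\theta)\oplus(\theta\circ\mu)\calH^2$, which forces $U_\mu\calK_\theta=\calK_{\theta\circ\mu}$ with no boundary computation at all. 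You instead push the characterisation \eqref{a2} through $U_\mu$, which requires (a) extending $U_\mu$ to $L^2(\bbT)$ by the same formula on boundary values and noting that this extension is a unitary involution of $L^2$ mapping $\calH^2$ onto $\calH^2$ — so that ``$g\in\calH^2\iff U_\mu g\in\calH^2$'' is legitimate for the non-analytic function $g=\overline z\,\theta\,\overline h$ — and (b) the explicit conjugation identity, which indeed comes out as $U_\mu(\overline z\,\theta\,\overline h)=-(\theta\circ\mu)\,\overline z\,\overline{U_\mu h}$, so membership in $\calH^2$ transfers up to a harmless sign. Your involution trick for the reverse inclusion is also sound. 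What each approach buys: the paper's is shorter and stays entirely inside $\calH^2$, exploiting unitarity and the orthogonal-complement definition of $\calK_\theta$; yours is more computational but in effect pre-derives the identity $U_\mu(\overline z\,\overline h)=-\overline z\,\overline{U_\mu h}$, which the paper needs anyway in Section~\ref{sec.c} to transport the action formula \eqref{a00} — so your extra work is not wasted, it is simply front-loaded. The one point you should make explicit if you write this up is item (a) above; as stated, ``$U_\mu$ is unitary on $\calH^2$'' does not by itself justify applying $U_\mu$ to $\overline z\,\theta\,\overline h$.
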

\begin{proof}
Clearly, $U_\mu(p \calK_\theta)=(p\circ\mu)U_\mu(\calK_\theta)$. Also,  $U_\mu(\theta \calH^2)=(\theta\circ\mu) \calH^2$ 
and so $U_\mu \calK_\theta=\calK_{\theta\circ\mu}$. 
\end{proof}

\subsection{Proof of Theorem~\ref{thm1} in the case $E_{H_u}(s)\perp\1$}

Let us choose $\alpha\in\bbD$ which is not a common zero of all elements of  $E_{H_u}(s)$. 
Consider the conformal map $\mu$ and the unitary operator $U_\mu$ corresponding to this point $\alpha$. 
By the choice of $\alpha$, the point $z=0$ is not a common zero for $U_\mu E_{H_u}(s)$, i.e. $U_\mu E_{H_u}(s)\not\perp\1$.
Moreover, by Lemma~\ref{lma.c1}, the latter subspace is a Schmidt subspace of a Hankel operator $H_w$,
$$
U_\mu E_{H_u}(s)
=\Ker (U_\mu H_u^2 U_\mu-s^2I)
=E_{H_w}(s)\, .
$$
Thus, by the already proven case of Theorem~\ref{thm1}, applied to $H_w$, 
we obtain that 
$$
E_{H_w}(s)=p\calK_\theta, 
$$
where $p$ is an isometric multiplier on $\calK_\theta$, and  that $H_w$ acts on $E_{H_w}(s)$ according
to the formula \eqref{a00}: 
\[
H_w(ph)=se^{i\varphi}p\overline{z}\theta \overline{h}, \quad h\in \calK_\theta.
\label{d6}
\]
By Lemma~\ref{lma.c2} we obtain 
$$
E_{H_u}(s)=U_\mu E_{H_w}(s)=U_\mu(p\calK_\theta)=(p\circ\mu)\calK_{\theta\circ\mu}\, ,
$$
which proves the first part of the theorem. It remains to check formula \eqref{a00} for the action of $H_u$.

Denote $U_\mu h=v\in \calK_{\theta\circ\mu}$.
Let us apply $U_\mu$ on both sides of \eqref{d6}. 
For the left hand side, we have
$$
U_\mu H_w(ph)
=
U_\mu H_w(pU_\mu v)
=
U_\mu H_wU_\mu ((p\circ\mu)v)
=
H_u((p\circ\mu)v).
$$
For the right hand side, we have
$$
U_\mu(se^{i\varphi} p\overline{z}\theta\overline{h})
=
se^{i\varphi}(p\circ\mu)(\theta\circ\mu)U_\mu(\overline{z}\overline{h}). 
$$
By the definition of $U_\mu$, 
\begin{align*}
U_\mu(\overline{z}\overline{h})
&=
\frac{\sqrt{1-\abs{\alpha}^2}}{1-\overline{\alpha}z}
\overline{\mu(z)}
\overline{h(\mu(z))}
=
\frac{\sqrt{1-\abs{\alpha}^2}}{1-\overline{\alpha}z}
\frac{\overline{\alpha}-\overline{z}}{1-\alpha\overline{z}}
\overline{h(\mu(z))}
\\
&=
-\overline{z}
\frac{\sqrt{1-\abs{\alpha}^2}}{1-\overline{\alpha}z}
\frac{1-\overline{\alpha}z}{1-\alpha\overline{z}}
\overline{h(\mu(z))}
=
-\overline{z}\overline{U_\mu h}\, .
\end{align*}
Putting this together, we obtain 
$$
H_u((p\circ\mu)v)=-se^{i\varphi}(p\circ\mu)\overline{z}(\theta\circ\mu)\overline{v}, 
$$
for all $v\in \calK_{\theta\circ\mu}$. 
This is the required formula \eqref{a00}.


\end{document}